\newtheorem{thm}{Theorem}[section]
\newtheorem{cor}[thm]{Corollary}
\newtheorem{lem}[thm]{Lemma}
\newtheorem{example}[thm]{Example}
\newtheorem{ques}[thm]{Question}
\newtheorem{prop}[thm]{Proposition}
\newtheorem{defn}[thm]{Definition}
\newtheorem{rem}[thm]{\bf{Remark}}
\numberwithin{equation}{section}
\begin{document}

\title[Amenability of groups and semigroups characterized by configuration]
{Amenability of groups and semigroups characterized by
configuration}

\author[A. Tavakoli]{Ali Tavakoli $^*$}
\address[Ali Tavakoli]{Department of Mathematics, University of Isfahan,  Isfahan, Iran.}
\email{at4300125@gmail.com}

\author[A. Rejali]{Ali Rejali }
\address[Ali Rejali]{Department of Mathematics, University of Isfahan, Isfahan, Iran.}
\email{rejali@sci.ui.ac.ir}

  \thanks{$^*$Corresponding author}
%
\maketitle
%

\begin{abstract}
In 2005,  Abdollahi and Rejali, studied the relations between
paradoxical decompositions and configurations for semigroups. In
the present paper, we introduce another concept of amenability on
semigroups and groups which includes amenability of semigroups and
inner-amenability of groups. We have the previous known results to
semigroups and groups satisfying this concept.\\
\textbf{Keywords:}  Amenability, configuration, paradoxical
decomposition, semigroup  \\
\textbf{MSC(2010):}  Primary: 22A05; Secondary: 43A07.
\end{abstract}

\section{\bf Introduction}

The notion of an amenable group was introduced by von Neumann in
1929 in relation with his studies of the Banach-Tarski paradox.
Tarski in 1929  proved the well known alternative theorem: a group
is either amenable or paradoxical. The theory of amenability was
extended in the semigroup setting by Day in the 1950s. Nowadays it
plays a major role not only in Geometric Group Theory, but also in
Functional and Harmonic Analysis, in Ergodic Theory and Dynamical
Systems, and in Operator Algebras. The notion of a configuration
for groups was first introduced by
Rosenblatt and Willis in \cite{rw}, but here, the definition is changed to another form.\\
 Let $G$ be a finitely generated group and $F$ be a non-empty subset of the set $S(G)$ of
all bijective maps on $G$. Let
$\varphi=(\varphi_1,\ldots,\varphi_n)$ be a sequence in $F$ such
that the subgroup $<F>$ generated by $F$ in $S(G)$, is equal to
$<\varphi_1,\ldots,\varphi_n>$ and let
$\mathcal{E}=\{E_1,\ldots,E_m\}$ be a partition of $G$. An
$(n+1)$-tuple $C=(c_0,\ldots,c_n)$, where $c_i\in\{1,\ldots,m\}$
for each $i\in\{0,1,\ldots,n\}$, is called an $F$- configuration
corresponding to the configuration pair $(\varphi,\mathcal{E})$,
if there exist an element $x\in G$ with $x\in E_{c_0}$ such that
$\varphi_i(x)\in E_{c_i}$, for each $i\in\{1,\ldots,n\}$. The set
of all $F$-configurations corresponding to the configuration pair
$(\varphi,\mathcal{E})$ will be denoted by
$Con_F(\varphi,\mathcal{E})$.\\
Let
$x_0(C)=E_{c_0}\cap\varphi_1^{-1}(E_{c_1})\cap\ldots\cap\varphi_n^{-1}(E_{c_n})$
and $x_j(C)=\varphi_j(x_0(C))$, for $C\in
Con_F(\varphi,\mathcal{E})$. Then the $F$-configuration equation
corresponding to the configuration pair $(\varphi,\mathcal{E})$ is
the system of equations
\begin{equation}
\sum \{f_{C}\mid \quad x_{0}(C)\subseteq E_{i}\}=\sum \{f_{C}\mid
\quad x_{j}(C)\subseteq E_{i}\}, \label{clever1}
\end{equation}
where $f_{C}$ is the variable corresponding to the configuration
$C$. This system of equations will be denoted by
$Eq_F(\varphi,\mathcal{E})$. In this case, this equation system is
equivalent to a matrix equation as
\begin{equation}
AX=0, \label{clever2}
\end{equation}
where $A$ is an $nm\times |Con_F(\varphi,\mathcal{E})|$ matrix
whose entries are 0, 1 or -1 and $X$ is the vector $[f_{C}]$,
where $C$ runs over $Con_F(\varphi,\mathcal{E})$.\\
A solution $[f_{C}]$ to $Eq_F(\varphi,\mathcal{E})$ satisfying
$\sum_{C} \{f_{C}\mid C\in Con_F(\varphi,\mathcal E)\}=1$ and
$f_{C}\geq 0$, for all $C\in Con_F(\varphi,\mathcal{E})$ will be
called a normalized solution of the equations system
\eqref{clever1}. The corresponding matrix form whose solution is
normalized, has the form $AX=B$, where $A$ is an $(nm+1)\times
|Con_F(\varphi,\mathcal{E})|$ matrix whose entries are 0, 1 or -1
and all entries of the last row of $A$ are 1. $X$ is the vector
$[f_{C}]$ and $B$ is the vector whose last entry is 1 and all
others are 0. It is well known that, if $A=[a_{i,j}]$, then
$a_{i,j}=1$ [resp. $a_{i,j}=-1$] if and only if $x_i(C)\subseteq
E_j$ and $x_0(C)\nsubseteq E_j$ [resp. $x_i(C)\nsubseteq E_j$ and
$x_0(C)\subseteq E_j$], for some $C\in
Con_F(\varphi,\mathcal{E})$;
otherwise $a_{i,j}=0$.\\
By a non-zero solution of $Eq_F(\varphi,\mathcal{E})$, we mean a
solution $\{f_C\mid C\in Con_F(\varphi,\mathcal{E})\}$ of
$Eq_F(\varphi,\mathcal{E})$ such that $f_C \neq 0$  for some $C\in
Con_F(\varphi,\mathcal{E})$. We show that (see proposition
\ref{prop1}) the  equation in matrix form  has a non-zero solution
if and only if the latter has a normalized solution. It is easy to
see that a matrix equation $AX=0$ has a non-zero solution if and
only if rank($A$) is less than the number of columns of $A$.
Therefore the matrix equation \eqref{clever2} has no non-zero
solution if and only if $rank(A)\leq
|Con_F(\varphi,\mathcal{E})|$.\\
The relation between amenability and configuration of a group was
studied in \cite{rw} and \cite{rty}. Here, we introduce the
concept of $F$-amenability of a group.

\begin{defn}
A group $G$ is called $F$-amenable, if there exist an
$F$-invariant mean $M$ on $\ell_{\infty}(G)$ that is
$M(f\circ\varphi)=M(f)$, for all $f\in \ell_{\infty}(G)$ and
$\varphi\in F$, where $\ell_{\infty}(G)$ denotes the set of all
real valued bounded functions on $G$.
\end{defn}

Now let $G$ be a finitely generated group and $L(G)=\{\lambda_x :
x\in G\}$, where $\lambda_x : G\rightarrow G$ is the left
translation $y\mapsto xy$ for each $y\in G$, and $I(G)=\{I_x :
x\in G\}$ where $I_x : G\rightarrow G$ is the inner automorphism
$y\mapsto x^{-1}yx$. Then, according to our terminology, $G$ is
$L(G)$-amenable [$I(G)$-amenable] if and only if $G$ is amenable
[resp. inner amenable]. In  general, inner amenability is much
weaker than amenability. So, $F$-amenability does not imply
amenability.\\
The configuration which introduced in \cite{rw} can be obtained as
an important special case of our notion. In fact, Rosenblatt and
Willis studied
$$Con(G)=\{Con_F(\varphi,\mathcal{E}) | F \text{ is a finite subset of } L(G) \text{ s.t. } \lambda(G)=<F> \}.$$

\begin{rem} \label{remamena}
Let $F=<\varphi_1,\ldots,\varphi_n>$, for some $\varphi_i\in
S(G)$. Then each $\varphi\in F$ is a finite product of $\varphi_j$
and $\varphi_j^{-1}$.  Let $M$ be a
$\{\varphi_1,\ldots,\varphi_n\}$-invariant mean on
$\ell_{\infty}(G)$. Then
$$M(f)=M((f\circ\varphi_j^{-1})\circ\varphi_j)=M(f\circ\varphi_j^{-1}),$$
for all $f\in \ell_{\infty}(G)$ and $j\in\{1,2,\ldots,n\}$.
Therefore $M$ is an $F$-invariant mean on $\ell_{\infty}(G)$.\\
Now suppose that $F$ is a non-empty subset of $S(G)$, not
necessary
finite. We have the following two facts.\\
(1) if $M$ is an $F$-invariant mean on $\ell_{\infty}(G)$, then
$M$ is an $<F>$-invariant mean on $G$.\\
(2) if $F_1\subseteq F_2$ are non-empty subsets of $S(G)$, then
$F_2$-amenability of $G$ implies  $F_1$-amenability of $G$.\\
\end{rem}

\begin{lem}
Let $F$ be a non-empty subset of $S(G)$, not necessary finite. The
following statements are equivalent.
\begin{enumerate}
\item $G$ is $F$-amenable.\\
\item $G$ is $<\varphi_1,\ldots,\varphi_n>$-amenable, for all finite subset $\{\varphi_1,\ldots,\varphi_n\}$ of $F$.\\
\item $G$ is $\{\varphi_1,\ldots,\varphi_n\}$-amenable, for all
finite subset $\{\varphi_1,\ldots,\varphi_n\}$ of $F$.\\
\end{enumerate}
\end{lem}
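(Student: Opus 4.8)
The plan is to establish the cycle of implications $(1)\Rightarrow(2)\Rightarrow(3)\Rightarrow(1)$. The first two implications will be immediate from Remark \ref{remamena}, while $(3)\Rightarrow(1)$ is the substantive step and will rest on a weak$^*$-compactness argument in $\ell_\infty(G)^*$.

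For $(1)\Rightarrow(2)$, I would fix a finite subset $\{\varphi_1,\dots,\varphi_n\}$ of $F$ and start from an $F$-invariant mean $M$. By Remark \ref{remamena}(1), $M$ is in fact $\langle F\rangle$-invariant, so $G$ is $\langle F\rangle$-amenable; since $\langle\varphi_1,\dots,\varphi_n\rangle\subseteq\langle F\rangle$, Remark \ref{remamena}(2) gives that $G$ is $\langle\varphi_1,\dots,\varphi_n\rangle$-amenable. For $(2)\Rightarrow(3)$, I would simply use $\{\varphi_1,\dots,\varphi_n\}\subseteq\langle\varphi_1,\dots,\varphi_n\rangle$ and apply Remark \ref{remamena}(2) once more.

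For $(3)\Rightarrow(1)$, let $\mathcal{M}$ be the set of all means on $\ell_\infty(G)$; as a weak$^*$-closed, norm-bounded subset of $\ell_\infty(G)^*$ it is weak$^*$-compact by the Banach--Alaoglu theorem. For $\varphi\in S(G)$ set
\[
K_\varphi=\{M\in\mathcal{M}: M(f\circ\varphi)=M(f)\ \text{for all}\ f\in\ell_\infty(G)\}.
\]
Since for each fixed $f$ the map $M\mapsto M(f\circ\varphi)-M(f)$ is weak$^*$-continuous, $K_\varphi$ is an intersection of weak$^*$-closed sets, hence weak$^*$-closed. For a finite subset $\Phi=\{\varphi_1,\dots,\varphi_n\}$ of $F$, put $K_\Phi=\bigcap_{i=1}^{n}K_{\varphi_i}$; then $K_\Phi$ is weak$^*$-closed, and by hypothesis $(3)$ it is non-empty, because a $\{\varphi_1,\dots,\varphi_n\}$-invariant mean is precisely an element of $K_\Phi$. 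The key point is that the family $\{K_\Phi:\Phi\subseteq F\ \text{finite}\}$ has the finite intersection property: given finite subsets $\Phi_1,\dots,\Phi_k$ of $F$, the set $\Phi=\Phi_1\cup\cdots\cup\Phi_k$ is again a finite subset of $F$, and $K_\Phi\subseteq\bigcap_{j=1}^{k}K_{\Phi_j}$, which is therefore non-empty. Weak$^*$-compactness of $\mathcal{M}$ then yields $\bigcap\{K_\Phi:\Phi\subseteq F\ \text{finite}\}\neq\emptyset$, and any $M$ in this intersection satisfies $M(f\circ\varphi)=M(f)$ for every $\varphi\in F$ (take $\Phi=\{\varphi\}$) and every $f\in\ell_\infty(G)$, i.e. $M$ is an $F$-invariant mean, so $G$ is $F$-amenable.

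I expect the only real obstacle to be this passage from arbitrarily large finite subsets of $F$ to all of $F$ in $(3)\Rightarrow(1)$; it is handled by compactness of the space of means together with the finite intersection property, so once the sets $K_\Phi$ are identified as weak$^*$-closed the conclusion follows automatically. The remaining implications are routine once the containments $\{\varphi_1,\dots,\varphi_n\}\subseteq\langle\varphi_1,\dots,\varphi_n\rangle\subseteq\langle F\rangle$ are combined with Remark \ref{remamena}.
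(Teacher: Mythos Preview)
Your proof is correct and takes essentially the same route as the paper: both dispose of $(1)\Rightarrow(2)\Rightarrow(3)$ via Remark~\ref{remamena} and then establish $(3)\Rightarrow(1)$ by weak-$*$ compactness of the set of means on $\ell_\infty(G)$. The paper phrases the compactness step as taking a weak-$*$ cluster point of the net $(M_C)$ indexed by finite subsets $C\subseteq F$ ordered by inclusion, whereas you use the equivalent finite-intersection-property formulation; the underlying argument is the same.
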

\begin{proof}
Due to the remark \ref{remamena}, it is sufficient to prove
(3)$\Rightarrow$(1). \\ Let $\mathrm{T}$ be the family of all
finite non-empty subsets of $F$. Then for every $C\in\mathrm{T}$,
there exists a $C$-invariant mean $M_C$ on $\ell_{\infty}(G)$. If
$\mathrm{T}$ is partially ordered by set inclusion, then, every
$M\in w^*-cl\{M_C\}$ is an $F$-invariant mean on
$\ell_{\infty}(G)$, where $w^*-cl$ means the weakly-$*$ closure.
\end{proof}
In \cite{rw} it is proved that a finitely generated  group $G$ is
amenable if and only if each configuration equation associated to
a configuration pair in $Con(G)$ has a normalized solution. The
link between amenability and normalized solution is seen in
\cite{arw} and certain group properties which can be characterized
by configurations is also studied. In \cite{arw} it is asked
whether the normalized solution can be replaced by a non-zero
solution in the latter. In section 2 we not only give a positive
answer to this question, but also we generalize  it for
$F$-amenability.

\begin{defn} \label{defas}
Let $\{A_1,\ldots,A_n;B_1,\ldots,B_m\}$ be a partition of $G$ such
that there exist two subsets $\{ \varphi_1,\ldots,\varphi_n\}$ and
$\{ \psi_1,\ldots,\psi_m\}$ of $F$ with the following property:
\begin{eqnarray*}
G & = & A_1\cup A_2\cup\ldots\cup A_n\cup B_1\cup B_2\cup\ldots B_m \\
& = & \varphi_1(A_1)\cup\varphi_2(A_2)\cup\ldots\cup\varphi_n(A_n) \\
&  = & \psi_1(B_1)\cup\psi_2(B_2)\cup\ldots\cup\psi_m(B_m).
\end{eqnarray*}
 Then we say that $G$ has an $F$-paradoxical decomposition $(\varphi_i,\psi_j;A_i,B_j)$. In this case, the
$F$-Tarski number of a group $G$ is the minimum of $m+n$, over all
possible $F$-paradoxical decompositions of $G$ and we denote it by
$\tau_F(G)$. If $G$ has no $F$-paradoxical decomposition, we put
$\tau_F(G)=\infty$.
\end{defn}
In section 3, we study the relation between non-$F$-amenability
and having an $F$-paradoxical decomposition for a group.\\
 A dynamical system is a triple $(G,X,\alpha)$, where $\alpha :
G\rightarrow S(X)$ is an action of a group $G$ on a set $X$. The
dynamical system $(G,X,\alpha)$ is amenable if there exists a
finitely additive probability measure $\mu$ defined on the power
set $P(X)$ of the space $X$ which is $\alpha$-invariant, i.e.
$\mu(\alpha_g(A))=\mu(A)$, for all $A\subset X$ and $g\in G$. We
know that the dynamical system $(G,X,\alpha)$ is amenable if and
only if $X$ has no paradoxical decomposition (see \cite{tgc}). Let
$F=\{\alpha_g | g\in G\}$ and $X=G$. Then the dynamical system
$(G,X,\alpha)$ is amenable if and only if $G$ if $F$-amenable.

\section{\bf $F$-Amenability of Groups}

Throughout  this section $G$ is a finitely generated group and $F$
is a non-empty subset of all bijective maps on $G$ such that
$<F>=<\varphi_1,\ldots,\varphi_n>$, where $\varphi_i\in F$, for
$i=1,2,\ldots,n$.
\begin{prop} \label{prop1}
The following statements are equivalent.
\begin{enumerate}
\item $G$ is $F$-amenable.\\
\item Each $F$-configuration equation $Eq_F(\varphi,\mathcal{E})$
has a normalized solution.\\
\item Each $F$-configuration equation $Eq_F(\varphi,\mathcal{E})$
has a non-zero solution.
\end{enumerate}
\end{prop}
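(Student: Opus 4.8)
The plan is to prove $(1)\Rightarrow(2)\Rightarrow(3)\Rightarrow(1)$, with the bulk of the new work living in $(3)\Rightarrow(1)$ and in the elementary-but-crucial equivalence $(2)\Leftrightarrow(3)$.

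For $(1)\Rightarrow(2)$: suppose $M$ is an $F$-invariant mean on $\ell_\infty(G)$. Fix a configuration pair $(\varphi,\mathcal E)$ with $\mathcal E=\{E_1,\dots,E_m\}$, and for each configuration $C\in Con_F(\varphi,\mathcal E)$ set $f_C=M(\chi_{x_0(C)})$, where $\chi_S$ denotes the indicator function of $S$. Since the sets $\{x_0(C):C\in Con_F(\varphi,\mathcal E)\}$ form a partition of $G$ (they are exactly the atoms $E_{c_0}\cap\varphi_1^{-1}(E_{c_1})\cap\cdots\cap\varphi_n^{-1}(E_{c_n})$ as $C$ ranges over all realized configurations), we have $\sum_C\chi_{x_0(C)}=\chi_G$, so $\sum_C f_C=M(\chi_G)=1$; and $f_C\ge 0$ because $M$ is a mean. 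To check the configuration equations, fix $i\in\{1,\dots,m\}$ and $j\in\{1,\dots,n\}$. On one side, $\sum\{f_C: x_0(C)\subseteq E_i\}=M(\chi_{E_i})$ since $E_i$ is the disjoint union of those $x_0(C)$ contained in it. On the other side, $\sum\{f_C: x_j(C)\subseteq E_i\}=\sum\{M(\chi_{x_0(C)}): \varphi_j(x_0(C))\subseteq E_i\}=M\bigl(\chi_{\varphi_j^{-1}(E_i)}\bigr)=M(\chi_{E_i}\circ\varphi_j)=M(\chi_{E_i})$, using bijectivity of $\varphi_j$, the disjointness of the atoms, and $F$-invariance of $M$. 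Hence $[f_C]$ is a normalized solution.

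For $(2)\Rightarrow(3)$ this is trivial, since a normalized solution has $\sum_C f_C=1$ and therefore is non-zero. The converse direction $(3)\Rightarrow(2)$ — needed to make the chain close — is Proposition \ref{prop1}'s companion claim already announced in the introduction: given a non-zero solution $[g_C]$ of the homogeneous system $AX=0$, one rescales. The subtlety is that a priori the $g_C$ need not be nonnegative, so one cannot simply divide by $\sum_C g_C$ (which could even vanish). I expect this to be handled by a separation/cone argument: the set of normalized solutions is the intersection of the affine solution space of $AX=0$ with the simplex $\{X\ge 0,\ \mathbf 1^\top X=1\}$; if this intersection were empty, a Hahn–Banach / Farkas-type separation would produce a vector $y$ with $y\ge 0$ coordinatewise failing on the solution space, which (by the explicit $0,\pm1$ structure of $A$ and the fact that each column sum of the ``row block'' of $A$ is zero — every $x_j(C)$ lies in exactly one $E_i$, as does $x_0(C)$) can be massaged into an obstruction contradicting the existence of any non-zero solution. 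This is the step I regard as the main obstacle: making the passage from ``some non-zero solution'' to ``a nonnegative normalized solution'' rigorous, presumably by invoking the duality between the configuration equations and invariant means directly rather than through raw linear algebra.

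For $(3)\Rightarrow(1)$ (equivalently, using the previous paragraph, $(2)\Rightarrow(1)$): the strategy is the standard compactness argument adapted to the $F$-setting. Direct the family of all configuration pairs $(\varphi,\mathcal E)$ by refinement of the partition $\mathcal E$ together with enlargement of the finite list $\{\varphi_1,\dots,\varphi_n\}$ drawn from $F$ with $\langle\varphi_1,\dots,\varphi_n\rangle=\langle F\rangle$. For each such pair, a normalized solution $[f_C]$ defines via $\mu(E_i):=\sum\{f_C:x_0(C)\subseteq E_i\}$ a finitely additive probability ``premeasure'' on the algebra generated by $\mathcal E$ that is approximately invariant under the listed $\varphi_j$, in the sense dictated by \eqref{clever1}. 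Extending each to a state on $\ell_\infty(G)$ (Hahn–Banach) and taking a weak-$*$ cluster point $M$ along the net — $\ell_\infty(G)^*$ restricted to states is weak-$*$ compact — yields a mean on $\ell_\infty(G)$. One then checks $M(\chi_E\circ\varphi_j)=M(\chi_E)$ for every $E$ in every $\mathcal E$ and every generator $\varphi_j$: this holds because for pairs refining a fixed finite data set the configuration equations force the equality exactly on the relevant indicator functions, and indicator functions of sets in the various $\mathcal E$ are uniformly dense enough in $\ell_\infty(G)$ (in the weak-$*$ sense via simple functions) to conclude $M(f\circ\varphi_j)=M(f)$ for all $f\in\ell_\infty(G)$. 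By Remark \ref{remamena}, invariance under the generators $\varphi_1,\dots,\varphi_n$ upgrades to $\langle F\rangle$-invariance, hence $F$-invariance, so $G$ is $F$-amenable. The one point requiring care here is ensuring the net of configuration data is genuinely cofinal enough that every $\varphi\in F$ (not merely the chosen generators) and every finite partition is eventually seen — but Remark \ref{remamena} and the hypothesis $\langle F\rangle=\langle\varphi_1,\dots,\varphi_n\rangle$ reduce this to the finitely many generators, so no real difficulty arises.
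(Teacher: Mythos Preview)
Your treatment of $(1)\Rightarrow(2)$, the triviality $(2)\Rightarrow(3)$, and the weak-$*$ compactness argument for $(2)\Rightarrow(1)$ are all essentially the paper's arguments (the paper makes $(2)\Rightarrow(1)$ a bit more concrete by choosing representatives $x_C\in x_0(C)$ and setting $f_{(\varphi,\mathcal E)}(x_C)=f_C$, zero elsewhere, before passing to a weak-$*$ cluster point, but the idea is the same).

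The genuine gap is exactly where you flag it: your $(3)\Rightarrow(2)$ is not a proof but a hope that a Farkas/separation argument will work. Note that the implication ``$AX=0$ has a non-zero solution $\Rightarrow$ it has a nonnegative non-zero solution'' is false for general matrices, so one really must use the structure here, and you do not indicate how. The paper's device is different and quite clean: identify a non-zero solution $[f_C]$ with an $f\in\ell_1(G)$, form the functional $\Phi(h)=\sum_{x\in G}f(x)h(x)$ on $\ell_\infty(G)$, and take its Jordan decomposition $\Phi=\Phi^+-\Phi^-$ with $\|\Phi\|=\|\Phi^+\|+\|\Phi^-\|$. The configuration equations give $\Phi(\chi_{E_i}\circ\varphi_j)=\Phi(\chi_{E_i})$ for all $i,j$, hence $\Phi(h\circ\varphi_j)=\Phi(h)$; the formula $\Phi^+(g)=\sup\{\Phi(h):0\le h\le g\}$ then transfers this invariance to $\Phi^+$ (using that $h\mapsto h\circ\varphi_j$ is an order isomorphism since $\varphi_j$ is bijective). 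Since $\|\Phi\|=\|f\|_1\neq 0$, one of $\Phi^{\pm}$ is non-zero, say $\Phi^+$, and then $k_C:=\Phi^+(\chi_{x_0(C)})/\|\Phi^+\|$ is the desired normalized solution. This Jordan-decomposition step is the missing idea in your proposal.
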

\begin{proof}
(1)$\Rightarrow$ (2) Let $M$ be an $F$-invariant mean on
$\ell_{\infty}(G)$. Then $f_C=M(\chi_{x_0(C)})$, for $C\in
Con_F(\varphi,\mathcal{E})$, is a normalized solution of
$Eq_F(\varphi,\mathcal{E})$.\\
(2)$\Rightarrow$(1) Let $(f_C)$ be a normalized solution of
$Eq_F(\varphi,\mathcal{E})$. \\
 Choose $x_C\in x_0(C)$ and define:
\begin{equation*}
f_{(\varphi,\mathcal{E})}(x) = \left\{
\begin{array}{rl}
f_C & \text{if } x = x_C,\\
0 & \text{ otherwise }.\\
\end{array} \right.
\end{equation*}
Then each $M\in w^*-cl\{\hat{f}_{(\varphi,\mathcal{E})}\}$
satisfies $M(f\circ\varphi)=M(f)$, for all $f\in \ell_{\infty}(G)$
and $\varphi\in F$.\\
(3)$\Rightarrow$(2) Let $f\in \ell_1(G)$ be a non-zero solution of
$Eq_F(\varphi,\mathcal{E})$. Define $\Phi\in \ell_{\infty}(G)^*$
by $\Phi(h)=\sum_{x\in G}f(x)h(x)$, for $h\in \ell_{\infty}(G)$.
There exist positive linear functionals $\Phi^+$ and $\Phi^-$ such
that $\Phi=\Phi^+ -\Phi^-$ and $\|\Phi\|=\|\Phi^+\|+\|\Phi^-\|$.
Since $\|\Phi\|=\|f\|_1 \neq 0$, so we can assume $\Phi^+ \neq 0$,
say. By definition,
$$\Phi^+(g)=\sup\{\Phi(h) : 0\leq h\leq g\},$$
for any non-negative function $g$. Furthermore,
\begin{eqnarray*}
\Phi(\chi_{E_i}o \varphi_j)=\Phi(\chi_{\varphi_j^{-1}(E_i)}) & = & \sum_C \{\Phi(\chi_{x_0 (C)}) : x_j(C)\subseteq E_i\} \\
& = & \sum_C \{f_C : x_j(C)\subseteq E_i\} \\
&  = & \sum_C \{f_C : x_0(C)\subseteq E_i\} = \Phi(\chi_{E_i}),
\end{eqnarray*}
for all $i$ and $j$. Thus $\Phi(h\circ\varphi_j)=\Phi(h)$, for all
$h\geq 0$. Therefore:
\begin{eqnarray*}
\Phi^+(\chi_{\varphi_j^{-1}(E_i)}) & = & \sup\{\Phi(h\circ\varphi_j) : 0\leq h\circ\varphi_j\leq\chi_{E_i}\circ\varphi_j\} \\
&  = & \sup\{\Phi(h) : 0\leq h\leq\chi_{E_i}\}=\Phi^+(\chi_{E_i}).
\end{eqnarray*}
Let $k_C=\Phi^+(\chi_{x_0(C)})/\|\Phi^+\|$, then $(k_C)$ is a
normalized solution of $Eq_F(\varphi,\mathcal{E})$.\\
(2)$\Rightarrow$(3) This is trivial.
\end{proof}

\begin{cor}
Let $G_1$ and $G_2$ be finitely generated groups such that
$Con_{F_1}(G_1)=Con_{F_2}(G_2)$. Then $G_1$ is $F_1$-amenable if
and only if $G_2$ is $F_2$-amenable.
\end{cor}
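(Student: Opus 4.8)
The plan is to reduce $F$-amenability to a purely combinatorial statement about the set $Con_F(\varphi,\mathcal E)$ and then read off the corollary from the hypothesis. By Proposition~\ref{prop1}, $G_i$ is $F_i$-amenable if and only if every $F_i$-configuration equation $Eq_{F_i}(\varphi,\mathcal E)$ has a non-zero solution, and, as noted after \eqref{clever2}, the homogeneous system $AX=0$ has a non-zero solution exactly when $\mathrm{rank}(A)$ is strictly less than the number of columns of $A$. So the whole corollary will follow once I show that, for a configuration pair $(\varphi,\mathcal E)$ with $\varphi=(\varphi_1,\ldots,\varphi_n)$ and $\mathcal E=\{E_1,\ldots,E_m\}$, the coefficient matrix $A$ of $Eq_F(\varphi,\mathcal E)$ is determined by the combinatorial set $Con_F(\varphi,\mathcal E)\subseteq\{1,\ldots,m\}^{n+1}$ alone, with no further dependence on $G$, on the maps $\varphi_i$, or on the sets $E_i$.

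To prove that, I would use the explicit description of the entries of $A$ recalled in the Introduction. For $C=(c_0,c_1,\ldots,c_n)\in Con_F(\varphi,\mathcal E)$ one has $x_0(C)\subseteq E_{c_0}$ and $x_i(C)=\varphi_i(x_0(C))\subseteq E_{c_i}$, so in the row of $A$ indexed by a pair $(i,j)$ with $1\le i\le n$, $1\le j\le m$, the entry in the column of $C$ is $1$ when $c_i=j$ and $c_0\neq j$, is $-1$ when $c_0=j$ and $c_i\neq j$, and is $0$ otherwise. Hence the size of $A$ (the integers $n$, $m$, and $|Con_F(\varphi,\mathcal E)|$) and each individual entry of $A$ are functions of the finite set of label-tuples $Con_F(\varphi,\mathcal E)$; in particular, two configuration pairs --- for possibly different groups --- that produce the same set of tuples produce literally the same matrix equation, and therefore the same answer to whether a non-zero solution exists.

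It then remains only to combine these observations. Since $Con_{F_1}(G_1)=Con_{F_2}(G_2)$, the families of configuration equations attached to $G_1$ and to $G_2$ have, after this identification, the same collection of coefficient matrices; so every $F_1$-configuration equation of $G_1$ admits a non-zero solution if and only if every $F_2$-configuration equation of $G_2$ does, and Proposition~\ref{prop1} turns this into the statement that $G_1$ is $F_1$-amenable if and only if $G_2$ is $F_2$-amenable. I expect the only real work to be the bookkeeping in the middle paragraph --- verifying carefully that nothing about the ambient group leaks into $A$ beyond the recorded labels $(c_0,\ldots,c_n)$, and that the numerical parameters $n$ and $m$ are genuinely recoverable from the set of tuples; once this is pinned down, the conclusion is immediate.
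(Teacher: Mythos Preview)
Your argument is correct and is exactly the reasoning the paper has in mind: the corollary is stated without proof, immediately after Proposition~\ref{prop1}, precisely because the configuration equations \eqref{clever1} (equivalently the matrix $A$ in \eqref{clever2}) are determined purely by the combinatorial data $Con_F(\varphi,\mathcal E)$, so equal configuration sets force the same solvability behavior. Your explicit bookkeeping---reading the entries of $A$ off the tuple $(c_0,\ldots,c_n)$ and recovering $n,m$ from the tuple set---just spells out what the paper leaves implicit.
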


\section{\bf $F$-Paradoxical Decomposition of Groups}

In this section, we generalize Tarski's theorem on amenability for
$F$-amenability of groups. For the special case, set $F=L(G)$.\\
Let $F$ be a subgroup of $S(G)$ under composition operation and
$A,B\subseteq G$. So $A$ and $B$ are $F$-equidecomposable if there
exist partitions $\{A_1,\ldots,A_m\}$ and $ \{B_1,\ldots,B_m\}$ of
$A$ and $B$, respectively, and elements $\varphi_i\in F$ such that
$\varphi_i(A_i)=B_i$ for all $i=1,\ldots,m$. If $A$ and $B$ are
$F$-equidecomposable, then we write $A\cong B$. We say that $A\leq
B$, if $A\cong C$ for some subset $C$ of $B$. It is routine to
show that "$\cong$" is an equivalence relation on power set
$P(G)$. Also a standard Cantor-Bernstein argument shows that $A\leq B$ and $B\leq A$
implies $A\cong B$. \\
Let $S_{\mathbb{N}}$ be the set of all bijective maps on
$\mathbb{N}$. Define $(\varphi,p)(x,n)=(\varphi(x),p(n))$, for
$\varphi\in F$ and $p\in S_{\mathbb{N}}$. Let
$$\mathcal{N}=\{C\subseteq G\times\mathbb{N} : C\subseteq B\times F
\text{ for some } B\subseteq G \text{ and finite set } F\subseteq
\mathbb{N} \}.$$ Then each $N\in \mathcal{N}$ can be written
uniquely in the form $N=\bigcup_{i=1}^n C_i\times \{j_i\}$, where
$1\leq j_1 < j_2 < \ldots <j_n $ and $\emptyset\neq C_i\subseteq
G$.\\
Let $N_1=\bigcup_{i=1}^n C_i\times \{j_i\}$ and
$N_2=\bigcup_{i=1}^n D_i\times \{k_i\}$ be elements of
$\mathcal{N}$. Then $N_1\cong N_2$ if and only if there exist
$\varphi_i\in F$ and $p_i\in S_{\mathbb{N}}$ such that
$\varphi_i(C_i)=D_i$ and $p_i(j_i)=k_i$, for $i\in\{1,2,\ldots
n\}$. Define $\sum=\frac{\mathcal{N}}{\cong}=\{N^{\thicksim} :
N\in \mathcal{N}\}$, where $N^{\thicksim}$ is the equivalence
class of $N$. Choose $h\in F\times S_{\mathbb{N}}$ such that
$h(N_1)\cap N_2=\emptyset$. Then $\sum$ is an abelian semigroup
under addition operation $N_1^{\thicksim} +
N_2^{\thicksim}:=(h(N_1)\cup
N_2)^{\thicksim}$.\\
Define $\alpha=(G\times\{1\})^{\thicksim}$,  so $2\alpha=\alpha +
\alpha = (G\times\{1\}\cup G\times\{2\})^{\thicksim}$.\\
In the following, we show that, $G$ is $F$-amenable if and only if
$\alpha\neq 2\alpha$. A finitely  additive probability measure
$\mu$ of the power set $P(G)$ is called  $F$-invariant, if
$\mu(\phi(A))=\mu(A)$ for all $A\subseteq G$ and $\phi\in F$.
\begin{lem} \label{lem1}
The following statements are equivalent.
\begin{enumerate}
\item $G$ is $F$-amenable.\\
\item There exist an $F\times S_{\mathbb{N}}$-invariant measure
$\mu$ on $\mathcal{N}$ such that $\mu(G\times\{1\})=1$.\\
\item There exist a homomorphism $f : \sum \rightarrow [0,\infty)$
such that $f(\alpha)=1$. \\
\item $\alpha\neq 2\alpha$.
\end{enumerate}
\end{lem}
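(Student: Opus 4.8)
The plan is to prove the cycle (1)$\Rightarrow$(2)$\Rightarrow$(3)$\Rightarrow$(4)$\Rightarrow$(1); the first three arrows are routine bookkeeping and essentially all the difficulty is in (4)$\Rightarrow$(1). For (1)$\Rightarrow$(2) I would start from an $F$-invariant mean $M$ on $\ell_\infty(G)$, pass to the finitely additive probability measure $\nu(A)=M(\chi_A)$ on $P(G)$, and note it is $F$-invariant because $\chi_{\varphi(A)}=\chi_A\circ\varphi^{-1}$ and $\varphi^{-1}\in F$ (recall that in this section $F$ is a subgroup of $S(G)$). Writing $N\in\mathcal N$ in its unique normal form $N=\bigcup_{i=1}^{n}C_i\times\{j_i\}$, I set $\mu(N)=\sum_{i=1}^{n}\nu(C_i)$; finite additivity of $\mu$ on $\mathcal N$ reduces sheet by sheet to that of $\nu$, invariance under $(\varphi,p)$ holds since $(\varphi,p)$ carries $C_i\times\{j_i\}$ to $\varphi(C_i)\times\{p(j_i)\}$, and $\mu(G\times\{1\})=\nu(G)=1$.

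For (2)$\Rightarrow$(3) I would define $f\colon\sum\to[0,\infty)$ by $f(N^{\sim})=\mu(N)$. Well-definedness is the point where the $F\times S_{\mathbb N}$-invariance of $\mu$ is used: if $N_1\cong N_2$ through $\varphi_i\in F$, $p_i\in S_{\mathbb N}$ with $\varphi_i(C_i)=D_i$, $p_i(j_i)=k_i$, then by finite additivity and invariance $\mu(N_1)=\sum_i\mu(C_i\times\{j_i\})=\sum_i\mu\bigl((\varphi_i,p_i)(C_i\times\{j_i\})\bigr)=\sum_i\mu(D_i\times\{k_i\})=\mu(N_2)$. To see $f$ is a homomorphism, given $N_1,N_2\in\mathcal N$ I pick $h\in F\times S_{\mathbb N}$ with $h(N_1)\cap N_2=\emptyset$ (push the sheets of $N_1$ past those of $N_2$) and compute $f(N_1^{\sim}+N_2^{\sim})=\mu(h(N_1)\cup N_2)=\mu(h(N_1))+\mu(N_2)=\mu(N_1)+\mu(N_2)$; also $f(\alpha)=\mu(G\times\{1\})=1$. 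For (3)$\Rightarrow$(4): if $\alpha=2\alpha$ then $1=f(\alpha)=f(\alpha+\alpha)=2f(\alpha)=2$, which is impossible.

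The hard part will be (4)$\Rightarrow$(1), which I would prove contrapositively and which is in substance Tarski's theorem. Suppose $G$ is not $F$-amenable. Because $F$ is a subgroup of $S(G)$, the existence of an $F$-invariant mean on $\ell_\infty(G)$ is equivalent to the existence of an $F$-invariant finitely additive probability measure on $P(G)$, which in turn is exactly the amenability of the dynamical system in which $F\hookrightarrow S(G)$ acts on $G$; so by the version of Tarski's theorem quoted from \cite{tgc}, $G$ has disjoint subsets $A,B$ each $F$-equidecomposable with $G$. Lifting the $B$-piece to the second sheet by the permutation of $\mathbb N$ that interchanges $1$ and $2$, the two equidecompositions combine into $(A\cup B)\times\{1\}\cong(G\times\{1\})\cup(G\times\{2\})$. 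Since $A\cup B\subseteq G$, this exhibits the type $2\alpha$ realized by a subset of $G\times\{1\}$, while $\alpha$ is realized by $G\times\{1\}$ itself, which sits inside that subset; a standard Cantor--Bernstein argument at the level of $\mathcal N$ (the one the paper already invokes for $P(G)$) then forces $G\times\{1\}\cong G\times\{1,2\}$, i.e. $\alpha=2\alpha$, contradicting (4).

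The real obstacle is exactly this last conversion of the purely algebraic statement $\alpha\neq2\alpha$ into a genuine invariant mean; my plan offloads it to the cited dynamical-systems form of Tarski's theorem. If one prefers to keep the argument internal to $\sum$, the alternative is: $\alpha\neq2\alpha$ forces $2\alpha\not\le\alpha$ (again by Cantor--Bernstein in $\mathcal N$), so $\alpha$ is not paradoxical in the commutative semigroup $\sum$, and Tarski's algebraic lemma yields a homomorphism $f\colon\sum\to[0,\infty]$ with $f(\alpha)=1$ (necessarily finite on every type dominated by $\alpha$); then $\nu(A)=f\bigl((A\times\{1\})^{\sim}\bigr)$ is an $F$-invariant probability measure on $P(G)$ whose integral is the $F$-invariant mean required by (1).
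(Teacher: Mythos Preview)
Your argument is correct, but the route through the hard implication differs from the paper's. The paper does not prove a cycle; it establishes (1)$\Leftrightarrow$(2) and (3)$\Rightarrow$(4) essentially as you do, but then places all the weight on (4)$\Rightarrow$(3): from $\alpha\neq2\alpha$ one argues (via \cite{ltp}, p.~119) that $k\alpha\neq l\alpha$ whenever $k\neq l$, so the map $n\alpha\mapsto n$ is a well-defined monotone homomorphism on the subsemigroup $T=\{n\alpha:n\in\mathbb N\}$ of $\sum$, and this extends to all of $\sum$ by the algebraic Tarski extension lemma (\cite{ltp}, p.~117). This is precisely your ``alternative'' at the end, and it keeps the proof internal to the type semigroup. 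Your primary route for (4)$\Rightarrow$(1) instead imports the dynamical-systems version of Tarski's theorem from \cite{tgc}: not $F$-amenable implies $G$ is $F$-paradoxical, hence $2\alpha\leq\alpha$ in $\mathcal N$, hence $\alpha=2\alpha$ by Cantor--Bernstein. That is logically sound, but note that in this paper Lemma~\ref{lem1} is exactly the tool used to \emph{derive} Theorem~\ref{them1} (the $F$-version of Tarski's alternative); invoking the dynamical-systems Tarski theorem here front-loads the very content Theorem~\ref{them1} is meant to supply, so the paper's choice of the algebraic extension lemma is the more self-contained one for its purposes.
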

\begin{proof}
(1)$\Rightarrow$(2) let $\nu$ be an $F$-invariant measure on
$P(G)$. Define $\mu(N)=\sum_{i=1}^n \nu(C_i)$, for each
$N=\cup_{i=1}^n C_i\times \{j_i\}$ in $\mathcal{N}$. Since
$\nu(G)=1$, we have $\mu(G\times\{1\})=1$ and
$$\mu(\varphi\times p(N))=\mu(\bigcup_{i=1}^n \varphi(C_i)\times\{p(j_i)\})
=\sum_{i=1}^n \nu(\varphi(C_i))= \sum_{i=1}^n\nu(C_i)=\mu(N).$$
Hence $\mu$ is an $F\times
S_{\mathbb{N}}$-invariant measure on $\mathcal{N}$.\\
(ii)$\Rightarrow$(i) Let $\mu$ be an $F\times
S_{\mathbb{N}}$-invariant measure on $\mathcal{N}$. Then
$\nu(A)=\mu(A\times\{1\})$ is an $F$-invariant measure on $P(G)$.
Thus $G$ is $F$-amenable.\\
(3)$\Rightarrow$(2) Let $\nu(A)=f(A\times\{1\})^{\thicksim}$, for
$A\subseteq G$. Then
$$\nu(G)=f(G\times\{1\})^{\thicksim}=f(\alpha)=1,$$
and $$\nu(A_1\cup
A_2)=f((A_1\times\{1\})\cup(A_2\times\{1\}))^{\thicksim}=
\nu(A_1)+\nu(A_2),$$
for $A_1 , A_2\subseteq G$ such that $A_1\cap A_2=\emptyset$.\\
(4)$\Rightarrow$(3) Let $T=\{n\alpha : n\in \mathbb{N}\}$ and $F :
T\rightarrow [0,\infty)$ defined by $F(n\alpha)=n$. Then by a
similar argument as is used in \cite{ltp}, p. 119,  $\alpha\neq
2\alpha$ if and only if $k\alpha\neq l\alpha$ whenever $k\neq l$.
 $T$ is a sub-semigroup  of the abelian semigroup $\sum$ and $F(\alpha)=1$;
also $s\leq t$ in $T$ (i.e. $s=t$ or there exist $w\in T$ such
that $s+w=t$) implies $F(s)\leq F(t)$; thus $F$ can be extended to
a homomorphism $f : \sum \rightarrow [0,\infty)$ so
that $f(\alpha)=1$ by \cite{ltp}, p. 117.\\
(1)$\Rightarrow$(3) Let $\nu$ be an $F$-invariant measure in $G$.
Define $f(N^{\thicksim})=\sum_{i=1}^n \nu(C_i)$, for
$N=\cup_{i=1}^n C_i\times \{j_i\}$. Let $N_1=\cup_{i=1}^n
C_i\times \{j_i\}$ and $N_2=\cup_{i=1}^n D_i\times \{k_i\}$ and
$N_1^{\thicksim}=N_2^{\thicksim}$. Then $N_1^{\thicksim}\cong
N_2^{\thicksim}$, so there exist $\varphi_i\in F$ and $p_i\in
S_{\mathbb{N}}$ such that $\varphi_i(C_i)=D_i$ and $p_i(j_i)=k_i$.
Hence $f(N_1^{\thicksim})=\sum_{i=1}^n \nu(C_i)=\sum_{i=1}^n
\nu(\varphi(C_i))=f(N_2^{\thicksim})$, so $f$ is well-defined.\\
Let $h=\varphi\times p\in F\times S_{\mathbb{N}}$, such that
$h(N_1)\cap N_2=\emptyset$. Then:
\begin{eqnarray*}
f(N_1^{\thicksim} + N_2^{\thicksim}) & = & f(\bigcup_{i=1}^n \varphi(C_i)\times\{p(j_i)\}\cup D_i\times\{k_i\}) \\
& = & \sum_{i=1}^n \nu(\varphi(C_i))+\sum_{i=1}^n \nu(D_i)=\sum_{i=1}^n \nu(C_i)+\sum_{i=1}^n \nu(D_i) \\
&  = & f(N_1^{\thicksim})+f( N_2^{\thicksim}).
\end{eqnarray*}
So $f$ is a homomorphism. Clearly,
$f(\alpha)=f((G\times\{1\})^{\thicksim})=\nu(G)=1$.\\
(3)$\Rightarrow$(4) Since $f(\alpha)=1$, so $f(2\alpha)=2$. Thus
$\alpha\neq 2\alpha$. Hence  the proof is complete.
\end{proof}

We now state the main result of this section.

\begin{thm} \label{them1}
The following statements are equivalent.
\begin{enumerate}
\item $G$ is $F$-amenable.\\
\item There exist no $F$-paradoxical decomposition for $G$.
\end{enumerate}
\end{thm}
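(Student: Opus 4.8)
The plan is to prove the equivalence by going through the machinery of Lemma~\ref{lem1}, which already reduces $F$-amenability to the condition $\alpha \neq 2\alpha$ in the semigroup $\sum$. So the real content is to relate the existence of an $F$-paradoxical decomposition to the equation $\alpha = 2\alpha$.

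First I would prove $(2)\Rightarrow(1)$ by contraposition: assume $G$ is not $F$-amenable, so by Lemma~\ref{lem1} we have $\alpha = 2\alpha$, i.e. $(G\times\{1\})^{\thicksim} = (G\times\{1\} \cup G\times\{2\})^{\thicksim}$. Unwinding the definition of $\cong$ on $\mathcal{N}$, this equivalence gives a decomposition $G = \bigcup_{i=1}^{k} C_i$ (the indices $j_i$ all equal $1$ on the left) and maps in $F$ carrying the pieces $C_i$ onto pieces that partition $G\times\{1\} \cup G\times\{2\}$; collecting the pieces sent to level $1$ and the pieces sent to level $2$, and passing from $S_{\mathbb N}$-components back to $G$-components, yields two subfamilies whose $F$-images each cover $G$. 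After renaming, this is exactly an $F$-paradoxical decomposition $(\varphi_i, \psi_j; A_i, B_j)$ in the sense of Definition~\ref{defas}. One has to be a little careful that the $C_i$ on the left side of $\alpha = 2\alpha$ really do partition $G$ and that their images really do partition $G \sqcup G$ rather than merely cover it — but since the equivalence of elements of $\mathcal{N}$ is given by bijections $\varphi_i$ on the pieces, disjointness is automatic, and one discards a null step.

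For the converse $(1)\Rightarrow(2)$, again by contraposition: suppose $G$ has an $F$-paradoxical decomposition $(\varphi_i, \psi_j; A_i, B_j)$ with $G = \bigsqcup A_i \sqcup \bigsqcup B_j = \bigcup \varphi_i(A_i) = \bigcup \psi_j(B_j)$. Using the Cantor--Bernstein / equidecomposability formalism set up just before Lemma~\ref{lem1} (the relations $\cong$ and $\leq$ on $P(G)$, and more precisely on $\mathcal{N}$), the first covering shows $G \leq G\times\{1\}$ realized on the pieces $A_i$ placed at level $1$, and the second shows $G \leq G\times\{2\}$ via the $B_j$ at level $2$; since the $A_i$ and $B_j$ together partition $G$, adding these gives $G\times\{1\} \cong G\times\{1\} \cup G\times\{2\}$ in $\mathcal{N}$, i.e. $\alpha = 2\alpha$. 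Then Lemma~\ref{lem1} says $G$ is not $F$-amenable. Here I would lean on the already-stated fact that $\cong$ is an equivalence relation and that the Cantor--Bernstein argument applies, so no new combinatorial lemma is needed.

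The main obstacle I anticipate is bookkeeping rather than conceptual: correctly translating between the "covering" formulation in Definition~\ref{defas} (where $G = \bigcup \varphi_i(A_i)$ is a covering, possibly with overlaps) and the "partition" formulation implicit in $N_1 \cong N_2$ for elements of $\mathcal{N}$ (where the pieces must be disjoint). The standard fix is to refine a covering $\bigcup \varphi_i(A_i) = G$ into a partition by replacing $\varphi_i(A_i)$ with $\varphi_i(A_i) \setminus \bigcup_{l<i}\varphi_l(A_l)$ and pulling this back through the bijection $\varphi_i$ to shrink $A_i$ accordingly; this keeps everything inside $F$ since each $\varphi_i$ is a bijection, and it only makes the sets smaller, which is harmless for producing $\alpha \ge 2\alpha$, hence $\alpha = 2\alpha$ by the Cantor--Bernstein step. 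Once that normalization is in place, both implications are essentially a direct unwinding of Lemma~\ref{lem1}.
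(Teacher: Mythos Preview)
Your argument for $(2)\Rightarrow(1)$ is exactly the paper's: use Lemma~\ref{lem1} to get $\alpha=2\alpha$, unwind the equivalence $G\times\{1\}\cong G\times\{1\}\cup G\times\{2\}$ in $\mathcal{N}$, and read off the $F$-paradoxical decomposition from the partition into pieces sent to level $1$ versus level $2$.

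For $(1)\Rightarrow(2)$, however, you take a genuinely different route. The paper argues directly with the invariant measure: if $\nu$ is $F$-invariant and $(\varphi_i,\psi_j;A_i,B_j)$ is an $F$-paradoxical decomposition, then $1=\nu(G)=\sum_i\nu(\varphi_i(A_i))=\sum_i\nu(A_i)$ and likewise $\sum_j\nu(B_j)=1$, so $\nu(G)=\sum_i\nu(A_i)+\sum_j\nu(B_j)=2$, a contradiction in three lines. Your approach instead pushes the paradoxical decomposition back through Lemma~\ref{lem1}: refine the coverings $G=\bigcup\varphi_i(A_i)=\bigcup\psi_j(B_j)$ to partitions, deduce $2\alpha\leq\alpha$, and invoke Cantor--Bernstein to get $\alpha=2\alpha$. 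This is correct, and it has the conceptual virtue of making both implications factor symmetrically through the single criterion $\alpha\neq2\alpha$; but it is heavier, since the refinement step and the Cantor--Bernstein appeal are extra machinery that the paper's direct measure computation sidesteps entirely. Note also that after refinement the shrunken $A_i',B_j'$ need not cover $G$, so you really do only get $2\alpha\leq\alpha$ and must invoke Cantor--Bernstein, as you say at the end; your earlier phrase ``adding these gives $G\times\{1\}\cong G\times\{1\}\cup G\times\{2\}$'' should read $\leq$ rather than $\cong$ at that stage.
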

\begin{proof}
(1)$\Rightarrow$(2) Suppose not! Let $\nu$ be an $F$-invariant
measure for $G$ and $(\varphi_i,\psi_j;A_i,B_j)$ be an
$F$-paradoxical decomposition for $G$. Then:
$$1=\nu(G)=\nu(\bigcup_{i=1}^n \varphi_i (A_i))=\sum_{i=1}^n \nu(\varphi_i(A_i))=\sum_{i=1}^n \nu(A_i).$$
Similarly, $\sum_{j=1}^m \nu(B_j)=1$. Hence,
$$1=\nu(G)=\nu(\bigcup_{i=1}^n A_i)+\nu(\bigcup_{j=1}^m B_j)=1+1=2,$$
which is a contradiction.\\
(2)$\Rightarrow$(1) Suppose not! so by lemma \ref{lem1},
$\alpha=2\alpha$. Then
$G\times\{1\}\cong(G\times\{1\})\cup(G\times\{2\})$. Thus there
exist a partition
$\{A_1\times\{1\},\ldots,A_n\times\{1\};B_1\times\{1\},\ldots,B_m\times\{1\}\}$
of $G\times\{1\}$ and $(\varphi_i,p_i),(\psi_j,q_j)\in F\times
S_{\mathbb{N}}$  such that $p_i(1)=1$ and $q_j(1)=2$ for all $i$
and $j$, so that:
$$G\times\{1\}\bigcup G\times\{2\}=(\bigcup_{i=1}^n \varphi_i\times p_i(A_i\times\{1\}))\bigcup(\bigcup_{j=1}^m
\psi_j\times q_j(B_j\times\{1\}))$$ Thus
$G\times\{1\}=\cup_{i=1}^n \varphi_i(A_i)\times\{1\}$ and
$G\times\{2\}=\cup_{j=1}^m \psi_j(B_j)\times\{2\}$. Hence
$G=\cup\varphi_i(A_i)=\cup\psi_j(B_j)$. So $G$ has an
$F$-paradoxical decomposition, which is a contradiction.
\end{proof}

Similar to \cite{rty}, we are interested to construct an
$F$-paradoxical decomposition for non-$F$-amenable groups by using
$F$-configuration equations
and conversely.\\
Let $(\varphi_i,\psi_j;A_i,B_j)$ be an $F$-paradoxical
decomposition of $G$ and $f\in \ell_1^+(G)$. Then:
\begin{eqnarray*}
\|f\|_1 & = & \sum_C\{f_C : C\in Con_F(\varphi,\mathcal{E})\} \\
& = & \sum_C\sum_{i=1}^n \{f_C : x_0(C)\subseteq A_i\}+\sum_C\sum_{j=1}^m \{f_C : x_0(C)\subseteq B_j\} \\
&  = & \sum_C\sum_{i=1}^n \{f_C : x_i(C)\subseteq A_i\}+\sum_C\sum_{j=1}^m \{f_C : x_j(C)\subseteq B_j\} \\
& = & 2\sum_C\{f_C : C\in Con_F(\varphi,\mathcal{E})\}=2\|f\|_1,
\end{eqnarray*}
where $\varphi=(\varphi_1,\ldots,\varphi_n;\psi_1,\ldots,\psi_m)$
and $\mathcal{E}=\{A_1,\ldots,A_n;B_1,\ldots,B_m\}$. Therefore
$Eq_F(\varphi,\mathcal{E})$ has no non-zero solutions.\\
Suppose $Eq_F(\varphi,\mathcal{E})$ has no non-zero solutions.
Suppose $Con_F(\varphi,\mathcal{E})=\{D_1,\ldots,D_s\}$ such that
$E_1=\cup_{i=1}^{r_1}x_0(D_i),E_2=\cup_{i=r_1+1}^{r_1+r_2}x_0(D_i)$
and so on. Define $\mathcal{E}^{\prime}=\{E^{\prime}_i :
i=1,\ldots,s\}$ where $E^{\prime}_i=x_0(D_i)$ for each $i$. Then
$Eq_F(\varphi,\mathcal{E}^{\prime})$ has a non-zero solutions.
Similarly, if $Eq_F(\varphi,\mathcal{E}^{\prime})$ has a non-zero
solutions then $Eq_F(\varphi,\mathcal{E})$ has a non-zero
solutions.

\begin{ques}
Let $Eq_F(\varphi,\mathcal{E})$ be a system of equations having no
non-zero solution for some configuration pair
$(\varphi,\mathcal{E})$. How can "explicitly"  construct an
$F$-paradoxical decomposition from $Eq_F(\varphi,\mathcal{E})$?
\end{ques}

It is to be noted that $G=<g_1,g_2,\ldots,g_l>$ is non-amenable if
and only if the equation $|g_i^{-1}E_j\cap X|=|E_i\cap X|$, for
$1\leq i\leq l , 1\leq j\leq m$, has no non-empty finite solution
$X$ in $G$, for some partition $\mathcal{E}=\{E_1,\ldots,E_m\}$.
\begin{example}
\cite{rw}. Let $G=<g_1,g_2>$ be the free group on two (free)
generators $g_1,g_2$ and $E_i$ be the set of all reduced words
starting $g_i$, for $i=1,2$, and $E_3=G-(E_1\cup E_2)$. Then
$Eq(\varphi,\mathcal{E})$ has no non-zero solution. In compare to
the above notations, let
$$(\varphi_i)=(1,\lambda_{g_1},\lambda_{g_1}),
(\psi_j)=(1,1,1,\lambda_{g_2},\lambda_{g_2})$$ and
$$(A_i)=(E^{\prime}_1,E^{\prime}_2,E^{\prime}_5),
(B_j)=(E^{\prime}_2,E^{\prime}_3,A,E^{\prime}_7,B),$$ for some
$A\subseteq E^{\prime}_6$ and $B=E^{\prime}_6-A$. Then
$(\varphi_i,\psi_j;A_i,B_j)$ is a paradoxical decomposition of
$G$.
\end{example}

\section{\bf $F$-Amenability of Semigroups}

In this section,  a new type of amenability for semigroups is
introduced. Also the notion of an $F$-paradoxical decomposition
for semigroups which was asked by Paterson in special case in
\cite{ltp} p. 120, is defined. We find the relation between the
existence of $F$-paradoxical decompositions and
non-$F$-amenability
for semigroups. The definition is almost similar to that of groups, we bring it for completeness.\\

Let $S$ be a discrete semigroup and $A\subseteq S$. For any map
$f:S\rightarrow S$ (not necessary invertible), recall that
$f^{-1}(A)=\{t\in S : f(t)\in A \}$. Let
$\varphi=(\varphi_1,\ldots,\varphi_n)$ be an $n$-tuple of the
functions (not necessary invertible) on $S$ and
$\mathcal{E}_0=\{E_1,\ldots,E_m\}$ be a partition of $S$. An
$(n+1)$-tuple $C=(c_0,\ldots,c_n)$, where $c_i\in\{1,\ldots,m\}$
for each $i\in\{0,1,\ldots,n\}$, is called a configuration
corresponding to the configuration pair $(\varphi,\mathcal{E}_0)$,
if there exist an element $x\in S$ with $x\in E_{c_0}$ such that
$\varphi_i(x)\in E_{c_i}$, for each $i\in\{1,\ldots,n\}$. The set
of all configurations corresponding to the configuration pair
$(\varphi,\mathcal{E}_0)$ will be denoted by
$Con(\varphi,\mathcal{E}_0)$. Let
$\mathcal{E}_i=\{\varphi_i^{-1}(E_j): j\in\{1,\ldots,m\}\}$, for
each $i\in\{1,\ldots,n\}$. Then $\mathcal{E}_i$ is a partition of
$S$ for each $i=1,2,\ldots,n$. (We remove empty elements from these collections.)\\
 Let $x_0(C)=E_{c_0}\cap\varphi_1^{-1}(E_{c_1})\cap\ldots\cap\varphi_n^{-1}(E_{c_n})$
and $x_j(C)=\varphi_j(x_0(C))$. \\
 Let $F$ be a non-empty subset of the set of all
maps $S^S$ on $S$. For $n$-tuples
$\varphi=(\varphi_1,\ldots,\varphi_n)$ in $F$ such that the
semigroup $<F>$ generated by $F$ in $S^S$ is equal to
$<\varphi_1,\ldots,\varphi_n>$, we call a configuration
corresponding to the configuration pair $(\varphi,\mathcal{E})$ is
denoted by $Con_F (\varphi,\mathcal{E})$. Then the
$F$-configuration equations corresponding to the configuration
pair $(\varphi,\mathcal{E})$ are defined similarly to the previous
case. (equations \eqref{clever1}) \\
A semigroup $S$ is called $F$-amenable, if there exists an
$F$-invariant mean $M$ on $\ell_{\infty}(S)$, that is
$M(f\circ\varphi)=M(f)$, for all $f\in \ell_{\infty}(S)$ and
$\varphi\in F$, where $\ell_{\infty}(S)$ denoted the set of all
real valued bounded functions on $S$.\\

 Adler and  Hamilton, \cite{ah}, showed that $S$ is left
amenable if and only if $S$ satisfies the following left invariant
condition:\\
for any sequence $(s_1,\ldots,s_n)$ in $S$ and for all sequences
$(A_1,\ldots,A_n)$ of subsets in $S$ there exists a non-empty
finite set $X\subseteq S$ such that $|s_i^{-1}A_i \cap X|=|A_i\cap
X|$ for all $i\in\{1,2,\ldots,n\}$.\\ We prove that $S$ is
$F$-amenable if and only if $S$ satisfies the $F$-invariant
condition. \\

\begin{defn}
 Let $\{A_1,\ldots,A_n;B_1,\ldots,B_m\}$ be a partition of
 semigroup $S$ and there exist two subsets $\{ \varphi_1,\ldots,\varphi_n\}$ and $\{
\psi_1,\ldots,\psi_m\}$ of $F$ such that the sets
 $\{\varphi^{-1}_1(A_1),\ldots,\varphi^{-1}_n(A_n)\}$ and
 $\{\psi^{-1}_1(B_1),\ldots,\psi^{-1}_m(B_m)\}$
 are two partitions of $S$. Then we say that $S$
 admits an $F$-paradoxical decomposition
 $(\varphi_i,\psi_j;A_i,B_j)$.
In this case, the $F$-Tarski number of a semigroup $S$ is the
minimum of $m+n$, over all possible $F$-paradoxical decompositions
of $S$.
\end{defn}

 We show that the $F$-Tarski number for semigroups can be 2; however the
corresponding number for groups is at least 4. At first, by a
similar argument as in used in proposition \ref{prop1}, the
following proposition is immediate.
\begin{prop}
The following statements are equivalent.
\begin{enumerate}
\item $S$ is $F$-amenable.\\
\item Each $F$-configuration equation $Eq_F(\varphi,\mathcal{E})$
has a normalized solution.\\
\item Each $F$-configuration equation $Eq_F(\varphi,\mathcal{E})$
has a non-zero solution.
\end{enumerate}
\end{prop}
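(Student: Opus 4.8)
The plan is to mirror the proof of Proposition \ref{prop1} almost verbatim, taking care of the one genuine difference: for semigroups the maps $\varphi_i$ need not be invertible, so the identity $M(f\circ\varphi_j^{-1})=M(f)$ used in the group case is unavailable, and the manipulation of preimages must be done directly with $\varphi_j^{-1}$ treated as a set-valued operation. The equivalences $(2)\Leftrightarrow(3)$ are essentially formal and carry over unchanged, so the substance is in relating an $F$-invariant mean to a normalized solution.

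First I would prove $(1)\Rightarrow(2)$. Given an $F$-invariant mean $M$ on $\ell_\infty(S)$, set $f_C = M(\chi_{x_0(C)})$ for $C\in Con_F(\varphi,\mathcal E)$. Positivity of $M$ gives $f_C\ge 0$, and since $\{x_0(C)\}_C$ partitions $S$ we get $\sum_C f_C = M(\chi_S)=1$, so the solution is normalized. To check it solves $Eq_F(\varphi,\mathcal E)$, fix $i,j$ and observe $\chi_{\varphi_j^{-1}(E_i)} = \sum\{\chi_{x_0(C)} : x_j(C)\subseteq E_i\}$ as an identity of $\{0,1\}$-valued functions (this is where one uses that $\varphi_j(x_0(C))=x_j(C)$ lies entirely in one partition element), while $\chi_{E_i}\circ\varphi_j = \chi_{\varphi_j^{-1}(E_i)}$ by definition of preimage even for non-invertible $\varphi_j$. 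Applying $M$ and using $F$-invariance, $\sum\{f_C : x_j(C)\subseteq E_i\} = M(\chi_{E_i}\circ\varphi_j) = M(\chi_{E_i}) = \sum\{f_C : x_0(C)\subseteq E_i\}$, which is precisely \eqref{clever1}.

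Next, $(2)\Rightarrow(1)$: given a normalized solution $(f_C)$, pick $x_C\in x_0(C)$, define $f_{(\varphi,\mathcal E)}\in\ell_1(S)$ supported on $\{x_C\}$ with value $f_C$, view it as a state $\hat f_{(\varphi,\mathcal E)}$ on $\ell_\infty(S)$, and take any $w^*$-cluster point $M$ over the net of configuration pairs directed by refinement; the configuration equations force $M$ to satisfy $M(f\circ\varphi)=M(f)$ for every $\varphi\in F$. For $(3)\Rightarrow(2)$ I would repeat the Hahn–Jordan argument from Proposition \ref{prop1}: from a non-zero $f\in\ell_1(S)$ solving the equation form $\Phi(h)=\sum_x f(x)h(x)$, decompose $\Phi=\Phi^+-\Phi^-$ with $\|\Phi\|=\|\Phi^+\|+\|\Phi^-\|$, note $\Phi(\chi_{E_i}\circ\varphi_j)=\Phi(\chi_{E_i})$ exactly as before (the computation only used preimages, not invertibility), deduce $\Phi(h\circ\varphi_j)=\Phi(h)$ for all $h\ge 0$, transfer this to $\Phi^+$ via the formula $\Phi^+(g)=\sup\{\Phi(h):0\le h\le g\}$, and normalize $k_C=\Phi^+(\chi_{x_0(C)})/\|\Phi^+\|$. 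Finally $(2)\Rightarrow(3)$ is trivial since a normalized solution is in particular non-zero.

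The main obstacle I anticipate is not any single hard estimate but the bookkeeping around non-invertible maps: one must be sure that every step that in the group proof implicitly used $\varphi_j\in S(G)$ (in particular the rewriting of $\chi_{E_i}\circ\varphi_j$ and the partition property of the sets $\varphi_j^{-1}(E_c)$) survives when $\varphi_j$ is merely a self-map. The definition in the excerpt already anticipates this — it defines $f^{-1}(A)=\{t\in S:f(t)\in A\}$ and notes that $\mathcal E_i=\{\varphi_i^{-1}(E_j):j\}$ is still a partition of $S$ after discarding empty cells — so the arguments do go through, but the write-up should flag explicitly that only $\chi_{\varphi_j^{-1}(E_i)}=\chi_{E_i}\circ\varphi_j$ and the partition property are needed, never an inverse map. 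Everything else is identical to Proposition \ref{prop1}, which is exactly why the paper can assert the result ``by a similar argument.''
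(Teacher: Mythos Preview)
Your proposal is correct and matches the paper's approach exactly: the paper proves this proposition simply by declaring it ``immediate'' from ``a similar argument as in used in proposition \ref{prop1},'' and you have faithfully reproduced that argument while flagging the one point that needs care in the semigroup setting (that $\chi_{E_i}\circ\varphi_j=\chi_{\varphi_j^{-1}(E_i)}$ and the partition property of the $\varphi_j^{-1}(E_c)$ hold for arbitrary self-maps, so invertibility is never used). There is nothing to add.
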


\begin{lem} \label{lem2}
The following statements are equivalent.
\begin{enumerate}
\item $S$ is $F$-amenable.\\
\item For any sequence $(\varphi_1,\ldots,\varphi_k)$ in $F$ and
for all sequence $(A_1,\ldots,A_k)$ of subsets in $S$, there exist
a finite non-empty subset $X\subseteq S$ such that,
$$|\varphi_i^{-1}(A_i)\cap X| = |A_i\cap X|, \text{ for all }  i=1,\ldots,k.$$
\item For any sequence $(\varphi_1,\ldots,\varphi_n)$ in $F$ and
for each partition $\{E_1,\ldots,E_m\}$ of $S$, there exist a
non-empty finite subset $X\subseteq S$ such that,
$$|\varphi_i^{-1}(E_j)\cap X| = |E_j\cap X|, \text{ for all }  i,j .$$
\end{enumerate}
\end{lem}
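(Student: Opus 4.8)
The plan is to establish Lemma~\ref{lem2} by proving the cycle of implications $(1)\Rightarrow(3)\Rightarrow(2)\Rightarrow(1)$, reducing each to the characterization of $F$-amenability via configuration equations already obtained in the preceding proposition. The link is that statement (2) says the linear system
$\{|\varphi_i^{-1}(A_i)\cap X|=|A_i\cap X|\}$ has a finite nonempty solution set $X\subseteq S$, and the indicator function $\chi_X$ (suitably normalized) is precisely the kind of object that produces a nonzero, indeed normalized, solution of $Eq_F(\varphi,\mathcal{E})$ for an associated configuration pair.

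First I would prove $(1)\Rightarrow(3)$. Given a sequence $(\varphi_1,\ldots,\varphi_n)$ in $F$ and a partition $\{E_1,\ldots,E_m\}$ of $S$, the (finite) subset $\{\varphi_1,\ldots,\varphi_n\}\subseteq F$ generates a finitely generated subsemigroup, so by the previous proposition $Eq_F(\varphi,\mathcal{E})$ has a normalized solution $(f_C)_{C\in Con_F(\varphi,\mathcal{E})}$. Following the idea used to pass between statements in Proposition~\ref{prop1}, pick $x_C\in x_0(C)$ for each configuration $C$ and form the atomic measure giving mass $f_C$ to $x_C$; this is finitely supported because $Con_F(\varphi,\mathcal{E})$ is a finite set. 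Rescaling so all masses are rational with a common denominator $N$, one replaces $x_C$ by $N f_C$ copies of it (using a disjoint copy trick, or more simply choosing the partition $\mathcal{E}'$ refined to singletons as in Section~3 so that the $x_0(C)$ can be taken distinct and $X$ can be realized as an honest subset of $S$). The equations \eqref{clever1}, which equate $\sum\{f_C: x_0(C)\subseteq E_j\}$ with $\sum\{f_C: x_i(C)\subseteq E_j\}$, translate exactly into $|\varphi_i^{-1}(E_j)\cap X|=|E_j\cap X|$ once one notes $x_0(C)\subseteq E_j$ iff $x_C\in E_j$ and $x_i(C)=\varphi_i(x_0(C))\subseteq E_j$ iff $\varphi_i(x_C)\in E_j$.

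Next, $(3)\Rightarrow(2)$ is the routine reduction from arbitrary finite families of subsets to partitions: given $(A_1,\ldots,A_k)$, let $\{E_1,\ldots,E_m\}$ be the partition of $S$ generated by the Boolean algebra on $\{A_1,\ldots,A_k\}$ (the atoms of the finite algebra they generate), so each $A_i$ is a disjoint union of some $E_j$'s; repeat each $\varphi_i$ as often as needed so that the index sets line up, apply (3) to get $X$, and sum the resulting equalities over the blocks comprising each $A_i$ to obtain $|\varphi_i^{-1}(A_i)\cap X|=|A_i\cap X|$. Then $(2)\Rightarrow(1)$: from such an $X$ define the finitely-additive probability measure $\mu_X(A)=|A\cap X|/|X|$ on $P(S)$; condition (2), applied with suitable finite families, makes a weak-$*$ cluster point of the net $(\mu_X)$ (indexed by finite subfamilies of $F$ together with finite partitions, ordered by refinement, exactly as in the proof of the Lemma in Section~1) an $F$-invariant mean on $\ell_\infty(S)$, giving $F$-amenability.

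The main obstacle is the first step, $(1)\Rightarrow(3)$: turning the real normalized solution $(f_C)$ into a genuine finite \emph{set} $X\subseteq S$ with the required cardinality equalities, rather than a finitely-additive measure. The subtlety is that $f_C$ need not be rational and the sets $x_0(C)$ need not be singletons, so one must either (i) first invoke the refinement $\mathcal{E}\mapsto\mathcal{E}'$ described after the Question in Section~3 to reduce to the case where each $x_0(C)$ is a single point and then approximate $f_C$ by rationals while preserving the equations — which is possible because \eqref{clever2} is a rational (indeed integer-coefficient) linear system, so its solution cone is spanned by rational points — or (ii) argue as in Adler–Hamilton \cite{ah} that the existence of a finitely-additive invariant mean forces, via a compactness/finite-intersection argument over the finitely many equations in play, the existence of an actual finite solution set. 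I would use route (i), citing the integrality of the configuration matrix $A$; the remaining verifications are the bookkeeping identifications $x_i(C)\subseteq E_j \iff \varphi_i(x_C)\in E_j$ already recorded in the excerpt, so they are routine.
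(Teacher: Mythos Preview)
Your implications $(3)\Rightarrow(2)$ and $(2)\Rightarrow(1)$ are correct. The paper argues $(3)\Rightarrow(2)$ the same way (pass to the Boolean-algebra partition generated by the $A_i$ and sum), and it handles the link back to amenability via the equivalent route $(3)\Rightarrow(1)$: from the finite set $X$ it sets $f_C=|X\cap x_0(C)|/|X|$, checks this is a normalized solution of $Eq_F(\varphi,\mathcal{E})$, and then invokes the preceding proposition (whose proof already contains the weak-$*$ limit you describe).

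The genuine gap is in your $(1)\Rightarrow(3)$ via route~(i). You correctly note that the configuration matrix has integer entries, so a normalized solution $(f_C)$ may be taken rational, say $f_C=p_C/N$ with $\sum_C p_C=N$. But to realize this as a finite \emph{subset} $X\subseteq S$ with $|X\cap x_0(C)|=p_C$ you would need $|x_0(C)|\ge p_C$ for every $C$, and nothing guarantees this. Neither of your fixes repairs it: ``replacing $x_C$ by $Nf_C$ copies'' yields a multiset, not a subset of $S$; and ``refining $\mathcal{E}$ to singletons'' is impossible when $S$ is infinite, since the partitions in a configuration pair are finite by definition. Passing to $\mathcal{E}'=\{x_0(C)\}$ as in Section~3 does not enlarge any cell either --- it only replaces $(f_C)$ by a new solution for the refined system, subject to the same obstruction. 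The step from an invariant mean (or a rational normalized solution) to an honest finite set $X$ is precisely the nontrivial content of the Adler--Hamilton theorem; it is not a bookkeeping matter and cannot be extracted from rational linear algebra on the configuration system alone.

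The paper sidesteps this entirely by running the cycle the other way: it proves $(2)\Leftrightarrow(3)$ and the easy direction $(3)\Rightarrow(1)$, and for the hard direction $(1)\Rightarrow(2)$ it simply cites \cite{ah}. In other words, the paper uses exactly your route~(ii); your chosen route~(i) does not close.
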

\begin{proof}
(2)$\Rightarrow$(3) Let $(\varphi_1,\ldots,\varphi_n)$ be a
sequence in $F$ and $\{E_1,\ldots,E_m\}$ be a partition of $S$.
Put
$$A_j=E_j, A_{m+j}=E_j, \ldots, A_{(n-1)m+j}=E_j \text{ for
all } j=1,\ldots,m.$$ Put also,
$$\varphi^{\prime}_j=\varphi_1, \varphi^{\prime}_{m+j}=\varphi_2, \ldots, \varphi^{\prime}_{(n-1)m+j}=\varphi_n
\text{ for all } j=1,\ldots,m.$$ Then for
$(\varphi^{\prime}_1,\ldots,\varphi^{\prime}_{mn})$ and
$(A_1,\ldots,A_{mn})$, there exists a non-empty subset $X\subseteq
S$ such that,
$$|\varphi_i^{-1}(E_j)\cap X| = |E_j\cap X|,$$
for $i\in\{1,2,\ldots,n\}$ and $j\in \{1,2,\ldots,m\}$.\\
(3)$\Rightarrow$(2) Let $(\varphi_1,\ldots,\varphi_k)$ be a
sequence in $F$ and $(A_1,\ldots,A_k)$ be a sequence of subsets in
$S$. Let $\mathcal{E}_i=\{A_i,A^c_i\}$, for $i=1,\ldots,k$ and
$\mathcal{E}$ be the family of all $n$-tuple intersections on
$\mathcal{E}_i$. Clearly, the cardinality of $\mathcal{E}$ is
$2^n$ and it is a partition of $S$. By (3), There exist a finite,
non-empty subset $X\subseteq S$ so that $|\varphi_i^{-1}(E)\cap X|
= |E\cap X|$ for all $i=1,\ldots,k$ and $E\in \mathcal{E}$. Then
one can show that easily $|\varphi_i^{-1}(A_i)\cap X| = |A_i\cap
X|$, for all $i=1,\ldots,k$.\\ For example, if $k=1$, then
$\mathcal{E}=\{A_1,A^c_1\}$ and there exist a finite, non-empty
subset $X\subseteq S$ such that $|\varphi_1^{-1}(A_1)\cap X| =
|A_1\cap X|$. Also, if $k=2$, then $\mathcal{E}=\{A_1\cap
A_2,A_1\cap A^c_2,A^c_1\cap A_2,A^c_1\cap A^c_2\}$. Hence, there
exist a finite non-empty subset $X\subseteq S$ such that,
$$|\varphi_i^{-1}(E)\cap X| = |E\cap X|,$$
for all $i\in\{1,2\}$ and $E\in \mathcal{E}$. Now we have:
\begin{eqnarray*}
|\varphi_1^{-1}(A_1)\cap X| & = & |\varphi_1^{-1}(A_1\cap A_2)\cap X|+|\varphi_1^{-1}(A_1\cap A^c_2)\cap X| \\
& = & |(A_1\cap A_2)\cap X|+|(A_1\cap A^c_2)\cap X| = |A_1\cap X|.
\end{eqnarray*}
Similarly, $|\varphi_2^{-1}(A_2)\cap X|=|A_2\cap X|$. \\
This completes the proof of (2).\\
(3)$\Rightarrow$(1) Suppose $\mathcal{E}=\{E_1,\ldots,E_m\}$ is a
partition of $S$ and $\varphi=(\varphi_1,\ldots,\varphi_n)$ is a
sequence in $F$.
 Then there exist a non-empty finite subset
$X\subseteq S$ such that,
$$|\varphi_i^{-1}(E_j)\cap X| = |E_j\cap X|, \text{ for all }  i,j .$$
Let
$$f_C=\frac{1}{|X|}|X\cap x_0(C)|, \text{ for all } C\in Con_F(\varphi,\mathcal{E}).$$
Therefore, $[f_C]$ is a normalized solution. In fact:
\begin{eqnarray*}
\sum\{f_C : x_i(C)\subseteq E_j\} & = &
\frac{1}{|X|}|\varphi^{-1}_i(E_j)\cap X|\\
 & = & \frac{1}{|X|}|E_j\cap X|=\sum\{f_C : x_0(C)\subseteq E_j\}.
\end{eqnarray*}
Hence, $S$ is $F$-amenable.\\
(1)$\Rightarrow$(2) See \cite{ah}.
\end{proof}
 The condition (ii) of lemma \ref{lem2}, is called $F$-invariant
condition of semigroup $S$. In the following, we extend
$F$-paradoxical decomposition for semigroups for which was asked
in \cite{ltp} p. 120.\\
 Now, suppose that the identity function $I:S\rightarrow S$ belongs to $F$ and $A,B\subseteq S$;
 then $A$ and $B$ are $F$-equidecomposable and write $A\cong B$, if there exist partitions
 $\{A_1,\ldots,A_n\}$ of $A$ and $\{B_1,\ldots,B_n\}$ of $B$, and
 elements $\varphi_i , \psi_i$ in $F$ such that
 $\varphi^{-1}_i(A_i)=B_i$ and  $\psi^{-1}_i(B_i)=A_i$ for all $i\in
 \{1,2,\ldots,n\}$. It is clear that the relation "$\cong$" is an equivalence relation on power set
 $P(S)$. \\
 We say also that a finitely  additive probability measure $\mu$ of the power set
 $P(S)$ is an $F$-invariant measure if $\mu(\varphi^{-1}(E))=\mu(E)$ for all $\varphi\in F$
and $E\subseteq S$. By an argument as in lemma \ref{lem1}, one can
show that $S$ is $F$-amenable if and only if $\alpha\neq 2\alpha$,
where $\alpha=(S\times\{1\})^{\thicksim}$.
\begin{lem}
The following statements are equivalent.
\begin{enumerate}
\item $S$ is not $F$-amenable.\\
\item $S$ admits an $F$-paradoxical decomposition.
\end{enumerate}
\end{lem}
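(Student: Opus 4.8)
The plan is to prove the two implications separately, following the template of Theorem \ref{them1} but adapting it to the semigroup setting where the maps in $F$ need not be invertible. Throughout I will use the earlier observation (the analogue of Lemma \ref{lem1} stated just before this lemma) that $S$ is $F$-amenable if and only if $\alpha \neq 2\alpha$ in the semigroup $\sum$, together with the equivalent formulation via an $F$-invariant finitely additive probability measure $\mu$ on $P(S)$ satisfying $\mu(\varphi^{-1}(E)) = \mu(E)$ for all $\varphi \in F$ and $E \subseteq S$.

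For (2)$\Rightarrow$(1), I argue by contraposition: assume $S$ is $F$-amenable and produce a contradiction from any $F$-paradoxical decomposition. Let $\mu$ be an $F$-invariant measure on $P(S)$, and let $(\varphi_i,\psi_j;A_i,B_j)$ be an $F$-paradoxical decomposition, so $\{A_1,\dots,A_n,B_1,\dots,B_m\}$ is a partition of $S$ while $\{\varphi_1^{-1}(A_1),\dots,\varphi_n^{-1}(A_n)\}$ and $\{\psi_1^{-1}(B_1),\dots,\psi_m^{-1}(B_m)\}$ are each partitions of $S$. Then from $F$-invariance, $1 = \mu(S) = \sum_{i=1}^n \mu(\varphi_i^{-1}(A_i)) = \sum_{i=1}^n \mu(A_i)$, and similarly $\sum_{j=1}^m \mu(B_j) = 1$; but since the $A_i$ and $B_j$ together partition $S$, also $\sum_i \mu(A_i) + \sum_j \mu(B_j) = \mu(S) = 1$, forcing $1 + 1 = 1$, a contradiction. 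Hence if $S$ is not $F$-amenable it can admit no $F$-paradoxical decomposition — wait, that is the wrong direction; to be precise this shows $F$-amenability rules out an $F$-paradoxical decomposition, i.e. the contrapositive of (2)$\Rightarrow$(1).

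For (1)$\Rightarrow$(2), assume $S$ is not $F$-amenable, so by the remark preceding the lemma $\alpha = 2\alpha$ in $\sum$, i.e. $S \times \{1\} \cong (S\times\{1\}) \cup (S\times\{2\})$. Unravelling the definition of $\cong$ on $\mathcal{N}$ (adapted to the semigroup case, using preimages), there is a partition $\{A_1\times\{1\},\dots,A_n\times\{1\},B_1\times\{1\},\dots,B_m\times\{1\}\}$ of $S\times\{1\}$ and pairs $(\varphi_i,p_i),(\psi_j,q_j)$ in $F \times S_{\mathbb N}$ with $p_i(1)=1$, $q_j(1)=2$, such that the preimages of the pieces under these maps reassemble to cover $(S\times\{1\}) \cup (S\times\{2\})$. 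Restricting to each level, this says $\{\varphi_i^{-1}(A_i)\}_i$ partitions $S$ (the level-$1$ part) and $\{\psi_j^{-1}(B_j)\}_j$ partitions $S$ (the level-$2$ part), while $\{A_i\} \cup \{B_j\}$ partitions $S$. That is exactly an $F$-paradoxical decomposition $(\varphi_i,\psi_j;A_i,B_j)$ of $S$, completing the proof.

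The main obstacle, and the place requiring genuine care rather than transcription, is the bookkeeping in (1)$\Rightarrow$(2): in the group case $\varphi_i(A_i)$ appears on the output side, but here the defining relation for $\cong$ on $P(S)$ uses $\varphi^{-1}_i(A_i) = B_i$, so I must make sure the "doubling" equidecomposition $S\times\{1\} \cong (S\times\{1\})\cup(S\times\{2\})$ is translated into the preimage-partition form that matches Definition of $F$-paradoxical decomposition for semigroups — in particular checking that the pieces $\varphi_i^{-1}(A_i)$ are genuinely disjoint and exhaust $S$ (this is where surjectivity-type subtleties for non-invertible maps could bite), and that one can always choose the $p_i,q_j$ to fix $1$ and send $1$ to $2$ respectively. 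Once the equidecomposition is set up on $\mathcal N$ with these constraints, reading off the partition conditions level by level is routine.
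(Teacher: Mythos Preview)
Your proof is correct and follows essentially the same route as the paper: for (2)$\Rightarrow$(1) the paper uses an $F$-invariant mean $M$ on $\ell_\infty(S)$ rather than a measure $\mu$ on $P(S)$, but the computation is identical, and for (1)$\Rightarrow$(2) the paper simply writes ``by a similar argument as is used in Theorem~\ref{them1},'' which is precisely the $\alpha = 2\alpha$ unwinding you carry out. In fact you supply more detail on the preimage bookkeeping than the paper does; the concerns you flag at the end (disjointness of the $\varphi_i^{-1}(A_i)$ and choice of $p_i, q_j$) are handled automatically once the semigroup version of $\cong$ on $\mathcal{N}$ is set up with the convention $\varphi_i^{-1}(A_i) = B_i$, $\psi_i^{-1}(B_i) = A_i$.
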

\begin{proof}
(2)$\Rightarrow$(1) Let $(\varphi_i,\psi_j;A_i,B_j)$ be an
$F$-paradoxical decomposition of $S$ and suppose by contradiction
that $M$ is an $F$-invariant mean on $\ell_{\infty}(S)$. Then
$$1=M(1)=\sum_{i=1}^n M(\chi_{A_i}\circ\varphi_i)=\sum_{i=1}^n M(\chi_{A_i}).$$
Similarly, $\sum_{j=1}^m M(\chi_{B_j})=1$. Since
$\{A_1,\ldots,A_n;B_1,\ldots,B_m\}$ is a partition of $S$, we
deduce that $1=\sum_{i=1}^n M(\chi_{A_i})+\sum_{j=1}^m
M(\chi_{B_j})=2$, which gives a contradiction.\\
(1)$\Rightarrow$(2) It is by a similar argument as is used in
theorem \ref{them1}.
\end{proof}

\begin{rem}
Let $(s_i,t_j;A_i,B_j)$ be an $F$-paradoxical decomposition of
semigroup $S$ so that $|s^{-1}_i A_i\cap X|=|A_i\cap X|$ and
$|t^{-1}_j B_j\cap X|=|B_j\cap X|$, for all $i,j$, for some
non-empty subset $X\subseteq S$. Then:
 $$|X|=\sum_i|A_i\cap X|+\sum_j|B_j\cap X|=\sum_i|s^{-1}_i A_i\cap X|+\sum_j|t^{-1}_j B_j\cap X|=2|X|,$$
 hence, $X$ is empty.
 \end{rem}

Since the existence of $F$-invariant mean is independent of
generating sequence of $F$, the following statement is immediate.
\begin{cor}
Let $F=<\varphi_1,\ldots,\varphi_n>$; the following statements are
equivalent.
\begin{enumerate}
\item $S$ is $F$-amenable.\\
\item For any partition
$\{E_1,\ldots,E_m\}$, there exist a non-empty finite subset
$X\subseteq S$ such that,
$$|\varphi_i^{-1}(E_j)\cap X| = |E_j\cap X|, \text{ for all }  i,j.$$
\item For any partition $\mathcal{E}=\{E_1,\ldots,E_m\}$ of $S$,
there exist a non-empty finite subset $X\subseteq S$ such that,
$$|\varphi_i^{-1}(x_0(C))\cap X| = |x_0(C)\cap X|, $$
for all $i\in\{1,\ldots,n\}$ and $C\in
Con_F(\varphi,\mathcal{E})$, where
$\varphi=(\varphi_1,\ldots,\varphi_n)$.
\end{enumerate}
\end{cor}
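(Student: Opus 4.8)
The plan is to reduce everything to the equivalences already established in Lemma~\ref{lem2} and the accompanying Corollary/Proposition, so the only genuinely new content is the passage between the partition $\{E_1,\ldots,E_m\}$ and the refined collection $\{x_0(C):C\in Con_F(\varphi,\mathcal{E})\}$. First I would observe that (1)$\Leftrightarrow$(2) is precisely the content of Lemma~\ref{lem2} (the equivalence of $F$-amenability with the $F$-invariant condition), together with the remark that the existence of an $F$-invariant mean does not depend on the chosen generating sequence of $F$; this is exactly what the preceding sentence in the excerpt asserts, so (1)$\Leftrightarrow$(2) may be cited rather than reproved. Thus the work is to prove (2)$\Leftrightarrow$(3).

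For (2)$\Rightarrow$(3): given a partition $\mathcal{E}=\{E_1,\ldots,E_m\}$, note that each $x_0(C)=E_{c_0}\cap\varphi_1^{-1}(E_{c_1})\cap\cdots\cap\varphi_n^{-1}(E_{c_n})$, and the sets $\{x_0(C):C\in Con_F(\varphi,\mathcal{E})\}$ form a partition of $S$ refining $\mathcal{E}$ (every $E_j$ is the disjoint union of those $x_0(C)$ with $c_0=j$). Apply (2) to the \emph{refined} partition $\mathcal{E}'=\{x_0(C):C\in Con_F(\varphi,\mathcal{E})\}$ and the sequence $\varphi=(\varphi_1,\ldots,\varphi_n)$ to obtain a non-empty finite $X\subseteq S$ with $|\varphi_i^{-1}(x_0(C))\cap X|=|x_0(C)\cap X|$ for all $i$ and all $C$, which is exactly (3). (One should check $\mathcal{E}'$ has no empty members, or simply delete them as the paper does elsewhere; this does not affect the count.)

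For (3)$\Rightarrow$(2): given a partition $\{E_1,\ldots,E_m\}$, take the $X$ supplied by (3). Since $E_j=\bigsqcup_{c_0=j}x_0(C)$, summing the identities $|\varphi_i^{-1}(x_0(C))\cap X|=|x_0(C)\cap X|$ over all $C$ with $c_0=j$ gives $|\varphi_i^{-1}(E_j)\cap X|=\sum_{c_0=j}|\varphi_i^{-1}(x_0(C))\cap X|=\sum_{c_0=j}|x_0(C)\cap X|=|E_j\cap X|$, using that $\varphi_i^{-1}$ commutes with disjoint unions. This recovers (2). Alternatively one can route this through the normalized-solution criterion: set $f_C=|X\cap x_0(C)|/|X|$ exactly as in the proof of (3)$\Rightarrow$(1) in Lemma~\ref{lem2}, observe $[f_C]$ is a normalized solution of $Eq_F(\varphi,\mathcal{E})$ by the displayed computation there, and invoke the preceding Proposition to get back to $F$-amenability and hence to (2).

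The only mild obstacle is bookkeeping: one must be careful that $Con_F(\varphi,\mathcal{E})$ indexes a genuine partition of $S$ (disjointness is automatic from the defining intersections, and covering follows because every $x\in S$ determines some configuration $C$ with $x\in x_0(C)$), and that passing to $\mathcal{E}'$ keeps the $F$-invariant condition in the form stated in Lemma~\ref{lem2}(3) — which it does, since that condition was already shown equivalent to the version for arbitrary finite sequences of sets in Lemma~\ref{lem2}(2). No new analytic input is needed; everything is a finite combinatorial refinement argument layered on top of the already-proved equivalences.
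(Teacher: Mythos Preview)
Your proposal is correct and follows the same approach the paper intends: the paper gives no proof beyond the sentence ``Since the existence of $F$-invariant mean is independent of generating sequence of $F$, the following statement is immediate,'' and you have simply spelled out why it is immediate by invoking Lemma~\ref{lem2} for (1)$\Leftrightarrow$(2) and handling (2)$\Leftrightarrow$(3) via the refinement $\mathcal{E}'=\{x_0(C)\}$ of $\mathcal{E}$. The bookkeeping you flag (that the $x_0(C)$ partition $S$ and that preimages respect disjoint unions) is exactly the content the paper is suppressing.
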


\begin{example}
(1) Let $S=(\mathbb{N},\cdot)$ and $x\cdot y=x$ for $x,y\in S$.
Then ${}_x f=f(x)1$ for $f\in \ell_{\infty}(S)$. So $S$ is not
left-amenable and $S=E_1\cup E_2=g^{-1}_1 E_1=g^{-1}_2 E_2$, where
$E_1=2\mathbb{N}$, $E_2=2\mathbb{N}+1$, $g_1=2$ and $g_2=3$. Hence
$S$ has a paradoxical decomposition of Tarski number 2, see \cite{tgc}.\\
(2) Let $S=(\mathbb{N},\circ)$ and $x\circ y=y$, for $x,y\in S$.
Then ${}_x f=f$, for $f\in \ell_{\infty}(S)$. Then $S$ is
left-amenable and  $g^{-1}E=E$, for all $g\in S$ and $E\subseteq
S$. Hence $S$ has no paradoxical decompositions.
\end{example}

\begin{center}
{\textbf{Acknowledgments}}
\end{center}
The authors would like to thank the referee of the paper for his
very careful reading and his invaluable comments. They also thank
the Banach Algebra Center of Excellence for Mathematics at the University of Isfahan. \\


\end{document}